\documentclass[12pt,a4paper,reqno]{amsart}
\allowdisplaybreaks
\usepackage{amsmath}
\usepackage{amsfonts}
\usepackage{amssymb,amsthm,latexsym,enumerate,url,cases}
\usepackage{booktabs,array,longtable}
\numberwithin{equation}{section}
\usepackage{mathrsfs}
\usepackage{hyperref}
\hypersetup{colorlinks=true,citecolor=blue,linkcolor=blue,urlcolor=blue}
\newcommand{\pmodtight}[1]{\,({\rm mod}\ #1)}

\theoremstyle{plain}
\newtheorem{theorem}{Theorem}[section]
\newtheorem{lemma}[theorem]{Lemma}

\newtheorem{proposition}[theorem]{Proposition}

\theoremstyle{definition}

\usepackage{etoolbox}
\makeatletter
\patchcmd{\@settitle}{\uppercasenonmath\@title}{}{}{}
\patchcmd{\@setauthors}{\MakeUppercase}{}{}{}
\patchcmd{\section}{\scshape}{}{}{}
\makeatother

\begin{document}

\title
[{An improved upper bound on the Ruzsa number}]
{An improved upper bound on the Ruzsa number}

\author
[Y. Ding, Y. Sun and L. Zhao] 
{Yuchen Ding, \quad Yu-Chen Sun, \quad Lilu Zhao}

\address{(Yuchen Ding) School of Mathematical Sciences,  Yangzhou University, Yangzhou 225002, People's Republic of China;\ 
HUN-REN Alfr\'ed R\'enyi Institute of Mathematics, Budapest, Pf. 127, H-1364 Hungary}
\email{ycding@yzu.edu.cn}
\address{(Yu-Chen Sun) School of Mathematics, University of Bristol
Bristol, BS8 1UG, England}
\email{yuchensun93@163.com}
\address{(Lilu Zhao) School of Mathematical Science, University of Science and Technology of China, Hefei 250100, People's Republic of China}
\email{zhaolilu@ustc.edu.cn}

\keywords{additive basis, Ruzsa number, Erd\H os--Tur\'{a}n conjecture}
\subjclass[2020]{11B13, 11B34.}

\begin{abstract}
Let $R_m$ be the least positive integer $r$ such that there exists a set $A\subseteq \mathbb{Z}_{m}$ with $A+A=\mathbb{Z}_m$ for which the number of ordered solutions of $n=x+y$ with $x,y\in A$ is at most $r$ for every $n\in \mathbb{Z}_m$.
In this note we prove that $R_m\leqslant 128$ for every positive integer $m$, improving the previous bound $R_m\leqslant 192$.
\end{abstract}
\maketitle

\section{Introduction}
For any positive integer $m$, let $\mathbb{Z}_{m}:=\mathbb{Z}/m\mathbb{Z}$ be the set of residue classes modulo $m$. For any non-empty subsets $A,B\subseteq \mathbb{Z}_{m}$ and any $n\in \mathbb{Z}_{m}$, let $\sigma_{A,B}(n)$ be the number of ordered solutions to the equation $n=x+y$ with $x\in A$ and $y\in B$. Let $\sigma_{A}(n)=\sigma_{A,A}(n)$.
The Ruzsa number $R_m$ is defined to be the least positive integer $r$ so that there exists a set $A\subseteq \mathbb{Z}_{m}$ with
$$1\leqslant \sigma_{A}(n)\leqslant r \quad (~\forall n\in \mathbb{Z}_{m}).$$
Equivalently, one asks for an additive basis of the cyclic group $\mathbb Z_m$ whose ordered representation function is bounded uniformly in the residue class. Thus $R_m$ is a finite combinatorial covering parameter, measuring whether the set $A$ can cover every element by sums while keeping the multiplicities uniformly bounded.

A subset $A$ of the natural numbers is called an asymptotic basis if 
$$f_{A}(n):=|\{(a,b)\in A^2:\ a+b=n\}|\geqslant 1$$
 for all sufficiently large integers $n$.
Erd\H os and Tur\'{a}n \cite{Er-Tu} posed a well-known conjecture which states that if $A$ is an asymptotic basis, then the representation function $f_{A}(n)$ cannot be bounded. Motivated by the above Erd\H os--Tur\'{a}n conjecture on the asymptotic basis,  Ruzsa \cite{Ruzsa} constructed an asymptotic basis $A$ of natural numbers which has a bounded square mean value, i.e., 
\begin{align}\label{R-bound}\sum_{n\leqslant N}f_{A}^2(n)\ll N.\end{align}
In the proof of the above result, Ruzsa essentially proved that there exists a constant $C$ such that 
$R_m\leqslant C$ for any positive integer $m$. 
Tang and Chen \cite{Tang1} proved an explicit bound that $R_m\leqslant 768$ for all sufficiently large $m$. Shortly after, they further \cite{Tang2} showed that $R_m\leqslant 5120$ for any $m$. The bound of $R_m$ was considerably improved by Chen \cite{Chen1}, who established $R_m\leqslant 288$ for any positive integer $m$. Chen's bound was recently improved to $R_m\leqslant 192$ by Ding and Zhao \cite{DZ24}.

The purpose of this paper is to provide a new upper bound on $R_m$. 

\begin{theorem}\label{thm1}
For any positive integer $m$, we have $R_m\leqslant 128$.
\end{theorem}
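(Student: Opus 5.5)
The plan follows the Ruzsa/Chen scheme. First build, for each prime $p$, a set of \emph{integers} whose sumset covers a long interval with uniformly bounded representation function. Then reduce modulo $m$, choosing $p$ so that the interval length is comparable to $m$, which keeps the wrap-around overlap small. The improvement from $192$ to $128$ has to come from tightening the constants in both steps: the local multiplicity coming from the algebra, and the number of overlapping copies created by reduction.

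\emph{Step 1 (algebraic core).} For an odd prime $p$ and $x\in\{0,\dots,p-1\}$, let $\langle x^2\rangle\in[0,p)$ be the least nonnegative residue of $x^2$ modulo $p$, and put
$$a_x = x + 2p\langle x^2\rangle .$$
Suppose $n=a_x+a_y$. From $n$ one reads off $x+y$ exactly, since $0\le x+y<2p$, and then $\langle x^2\rangle+\langle y^2\rangle$. Hence $n$ determines $s=x+y \pmodtight{p}$ and $q=x^2+y^2 \pmodtight{p}$. Since $2xy=s^2-q$, the unordered pair $\{x,y\}$ is determined up to the two roots of a quadratic, so there are at most $2$ ordered solutions. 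To obtain a genuine basis, the set is thickened in a controlled way:
$$A_p=\{a_x+2p^2 u+ v\},$$
where $u$ runs over a small set of "digits" chosen so that $u+u'$ covers an interval exactly, and $v$ runs over a small set whose sums fill the gaps between consecutive values $x+y$ and the missing residues $q$. The aim is to make $A_p+A_p$ contain an interval $I_p$ of length about $c\,p^2$, with every $n$ represented at most $r_0$ times, with $r_0$ as small as possible. I expect to need the fact that $\langle x^2\rangle+\langle y^2\rangle$ hits every class modulo $p$ (every residue is a sum of two squares), together with counting $\#\{(x,y): x+y=s,\ x^2+y^2=q\}\le 2$.

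\emph{Step 2 (reduction to $\mathbb{Z}_m$).} Given $m$, choose a prime $p$ with $m\le c\,p^2\le(1+\varepsilon)m$, which is possible by primes in short intervals (Baker--Harman--Pintz) for large $m$. Let $A$ be the image of a suitable subset of $A_p$ in $\mathbb{Z}_m$. Each residue class $n\in\mathbb{Z}_m$ has a representative in $I_p$, so $\sigma_A(n)\ge1$. It has at most $\lceil |A_p+A_p|/m\rceil$ lifts in $A_p+A_p$, so $\sigma_A(n)\le r_0\cdot\lceil(\text{diameter of }A_p+A_p)/m\rceil$. The design target is $r_0\cdot(\text{overlap})\le128$, for example $r_0=32$ with overlap $4$, or $r_0=64$ with overlap $2$. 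It may help to discard the elements of $A_p$ whose sums only land in the tail beyond $I_p$, shrinking the overlap factor; I expect this trimming to be where the saving over \cite{DZ24} comes from. For small $m$, where the prime-gap input is unavailable, one either uses Chen's explicit prime-gap results (so a prime exists in $[x,(1+\varepsilon)x]$ for all $x\ge x_0$ with explicit $x_0$) or exhibits explicit bases computationally, or uses the trivial bound $\sigma_A\le |A|$ with $A=\{0,1,\dots,\lceil\sqrt{m}\rceil-1\}\cup\{k\lceil\sqrt m\rceil\}$, which gives $O(\sqrt m)$ and handles all $m$ below a modest threshold.

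\emph{Main obstacle.} The hard step is the bookkeeping in Step 1 combined with the overlap in Step 2. The filling sets $u,v$ must be chosen so that every $n$ in the covered interval is reached while the product of (algebraic multiplicity $2$) $\times$ (digit multiplicities) $\times$ (wrap-around count) stays at most $128$. I would first try digit sets that are perfect difference bases, for instance $\{0,1\}$ and $\{0,2\}$, so that $\{0,1\}+\{0,2\}$ tiles with multiplicity $1$. I would then check whether the residual multiplicity is $8$ before reduction and $\le 16$ after, with the remaining factor absorbed by symmetric ordered pairs. If that fails, I would fall back to a nonuniform trimming of the top part of $A_p$ adapted to $m/p^2$.
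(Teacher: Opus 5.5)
There is a genuine gap: Step~1 never produces a set with the required properties, and the one you sketch cannot be completed. Your graph $x\mapsto(x,x^2)$ is $\mathcal Q_1$. By Ruzsa's lemma (Lemma~\ref{lemma31} with $k=\ell=1$), a pair $(s,q)\in\mathbb Z_p^2$ has the form $(x+y,\,x^2+y^2)$ only when $2q-s^2$ is a square or zero, so about half of all pairs are missed. The fact that every residue is a sum of two squares does not help, because $x+y$ is prescribed at the same time. Nor can these holes be filled by a bounded set of translates: for any fixed set of $k$ translates, a character-sum (Weil) count shows that, once $p$ is large, roughly a $2^{-k}$ proportion of targets is still missed. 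So the filler would have to grow with $p$, and the multiplicity would be unbounded.

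The missing idea is Chen's dichotomy (Lemma~\ref{lemma32}). For a suitable non-residue $\lambda$ one takes the three graphs $\mathcal Q_{\lambda+1}$, $\mathcal Q_{\lambda(\lambda+1)}$, $\mathcal Q_{2\lambda}$. Then $\Delta_{12}=(\lambda+1)^2\delta$ and $\Delta_{33}=4\lambda\delta$ with $\delta=d-\lambda c^2$ have opposite quadratic character, so every $(c,d)$ is hit by $\mathcal Q_{k_1}+\mathcal Q_{k_2}$ or by $\mathcal Q_{k_3}+\mathcal Q_{k_3}$. Asymmetric digit sets such as your $\{0,1\},\{0,2\}$ only help when they are attached to different pieces whose designated cross-sums cover. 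That is exactly how the paper uses $E_1=\{-3,-2,-1\}$, $E_2=\{3,6,9\}$ on $\mathcal Q_{k_1},\mathcal Q_{k_2}$ and $E_3=\{0,1,3,4\}$ on $\mathcal Q_{k_3}$, with $E_1+E_2=E_3+E_3=\{0,\ldots,8\}$ in $\mathbb Z_{8p^2}$. The value $32$ then comes from an explicit count over $\varepsilon\in\{1,\ldots,8\}$ and the carry $h\in\{-1,0,1,2\}$. That count includes the non-covering cross terms $(1,1),(1,3),(2,2),(2,3)$, which your bookkeeping never addresses. Your ``$r_0=32$ with overlap $4$'' matches the final arithmetic $R_m\leqslant 4R_{8p^2}\leqslant 4\cdot 32$, but in your plan it is only a target.

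Step~2 also leaves some $m$ uncovered. Baker--Harman--Pintz is ineffective, so you need an explicit prime-gap input. Below its threshold you need something better than the trivial basis $\{0,\ldots,\lceil\sqrt m\rceil-1\}\cup\{k\lceil\sqrt m\rceil\}$, which has about $2\sqrt m$ elements and so gives $\leqslant 128$ only for $m$ up to roughly $64^2$. The paper uses the wide-window comparison $R_{m_2}\leqslant 4R_{m_1}$ for $\frac32 m_1\leqslant m_2<2m_1$ (Proposition~\ref{prop}). This requires only a prime in $(x,2x/\sqrt3]$ for $x\geqslant 33$, and so covers $m>132^2$. In the remaining range the trivial bound is still about $264$. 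The paper handles it with the explicit set $\{0,\ldots,73\}\cup\{c_j\}$, $c_j=74j-\lfloor j^2/32\rfloor$, whose $\sigma_C$ is at most $2\cdot 17$ by a finite check, giving $R_m\leqslant 116$ (Lemma~\ref{lem2}). Your plan needs an intermediate construction of this kind as well.
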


The main point of the proof is a new finite construction. In previous works, a key step was to obtain a bound for $R_{2p^2}$, where the letter $p$ denotes a prime number throughout. Indeed, both the bound $R_m\leqslant 288$ of Chen and the bound $R_m\leqslant 192$ of Ding and Zhao are based on the following result due to Chen \cite{Chen1}.
\begin{theorem}
[{\cite[Theorem 1]{Chen1}}]\label{thmChen}
Let $p$ be a prime. Then $R_{2p^2}\leqslant 48$.
\end{theorem}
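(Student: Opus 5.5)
The plan is to build the basis explicitly. We use the Chinese remainder decomposition $\mathbb{Z}_{2p^2}\cong\mathbb{Z}_2\times\mathbb{Z}_{p^2}$ for odd $p$, and adapt Ruzsa's parabola construction to $\mathbb{Z}_{p^2}$. For $p=2$ (so $m=8$) a direct small search gives some $A\subseteq\mathbb{Z}_8$ with $1\leqslant\sigma_A\leqslant 48$; even $A=\mathbb{Z}_8$ works, since then $\sigma_A\equiv 8$. So assume $p$ is odd.

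\emph{The basic parabola.} Every residue of $\mathbb{Z}_{p^2}$ can be written uniquely as $u+vp$ with $0\leqslant u,v\leqslant p-1$. For a nonzero constant $c$ modulo $p$, put
$$P_c=\{\,x+p\,\overline{cx^2}\ :\ 0\leqslant x\leqslant p-1\,\}\subseteq\mathbb{Z}_{p^2},$$
where $\overline{t}$ is the least nonnegative residue of $t$ modulo $p$. Take two elements of $P_c$ and add them. Reducing modulo $p$ gives $x+y\equiv u$. Since $0\leqslant x+y\leqslant 2p-2$, as an integer either $x+y=u$ or $x+y=u+p$; in the second case a carry $1$ enters the $p$-digit. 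So $n=u+vp$ is represented only if
$$x+y\equiv u\pmod p,\qquad c(x^2+y^2)\equiv v-\varepsilon\pmod p,\qquad \varepsilon\in\{0,1\}.$$
For fixed $\varepsilon$ this system fixes $x+y$ and $xy=\bigl(u^2-c^{-1}(v-\varepsilon)\bigr)/2$ modulo $p$. Hence it has at most $2$ ordered solutions, and it has a solution essentially when $2c^{-1}(v-\varepsilon)-u^2$ is a square modulo $p$, with $\varepsilon$ then forced by $x+y\geqslant p$ or not. So $\sigma_{P_c,P_c}(n)\leqslant 4$, and similarly $\sigma_{P_c,P_{c'}}(n)\leqslant 4$ for mixed sums.

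\emph{Making the sums cover everything.} A single $P_c$ covers only about half of $\mathbb{Z}_{p^2}$: the square condition holds for about half of the $v$, and the carry makes the actual digit uncertain. I would take $c$ a quadratic residue and $c'$ a non-residue, and also use translates by fixed small shifts, say $P_c$, $P_{c'}$, and $P_c+p$, to absorb the unknown carry $\varepsilon$. The $\mathbb{Z}_2$ coordinate is then used to decide which pairs of pieces may be added. Put copies $\{0\}\times X$ and $\{1\}\times Y$ into $A$, so that elements $(0,n)$ come from $X+X$ and $Y+Y$, and elements $(1,n)$ come from $X+Y$ counted twice. The pieces $X,Y$ are unions of a few $P_c$'s and shifts. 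Each $v$ modulo $p$ must meet at least one square class among the available ones, using $\varepsilon\in\{0,1\}$ and both residue classes of $c$. The key elementary fact is that for any $t$, one of $t$ and $\nu t$ is a square, where $\nu$ is a fixed non-residue.

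\emph{Counting, and the main obstacle.} Suppose $X$ is a union of $k_1$ pieces and $Y$ of $k_2$ pieces. Each ordered pair of pieces contributes at most $4$ to $\sigma_A$. The bound is then at most $4\max(k_1^2+k_2^2,\,2k_1k_2)$; for instance $k_1=k_2=\ldots$ must be chosen so that this is at most $48$. With $k_1=2$ and $k_2=2$ this gives $4\cdot 8=32$, leaving room if shifted pieces coincide on some sums. The main obstacle is the covering step. One must check that, for every $(e,u+vp)$, some admissible pair of pieces has a solvable system. The delicate points are the carry $\varepsilon$, which is determined by the solution rather than chosen freely, and the degenerate cases $u^2$ versus discriminant $0$. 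I would first try to handle the carry by including both $P_c$ and its shift $P_c+p$ in the same $\mathbb{Z}_2$-layer. That makes both values $v$ and $v-1$ available, and together with the residue/non-residue dichotomy this should force a square discriminant. Finally, one checks the bound $48$ by summing the at most $4$ contributions over the pairs of pieces.
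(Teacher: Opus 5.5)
Your proposal has a genuine gap. The covering step is the whole content of the theorem, and you never carry it out. Neither of the two ingredients you offer for it works as stated.

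\emph{The dichotomy.} The fact that one of $t,\nu t$ is a square does not give coverage, because the discriminants of different pairs of pieces are not constant multiples of one another. By Lemma~\ref{lemma31}, pieces of types $a,b$ give the discriminant $(a+b)w-ab\,u^2=(a+b)\bigl(w-\tfrac{ab}{a+b}u^2\bigr)$. This needs $p\nmid a+b$, which your bound ``at most $4$'' also requires: for $c'\equiv -c$ one gets $\sigma_{P_c,P_{c'}}(p)\geqslant p-1$. So two pairs $(a,b)$ and $(a',b')$ are complementary for all $(u,w)$ only if $\tfrac{ab}{a+b}=\tfrac{a'b'}{a'+b'}$ and $(a+b)(a'+b')$ is a non-residue. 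For your $P_c+P_c$ and $P_{c'}+P_{c'}$, the discriminants are $2c^{-1}w-u^2$ and $2c'^{-1}w-u^2$. When $u\not\equiv 0$, both are non-squares for about $p/4$ values of $w$. In general, for two diagonal pairs $(a,a),(b,b)$ the centre condition forces $a=b$, and then $4a^2$ is a square. A mixed pair is therefore unavoidable. This is exactly why Chen takes $k_1=m+1$, $k_2=m(m+1)$, $k_3=2m$ with $m$ a non-residue: then $\Delta_{12}=(m+1)^2\delta$ and $\Delta_{33}=4m\delta$ with $\delta=d-mc^2$, which is Lemma~\ref{lemma32}.

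\emph{The carry.} Under $\mathbb{Z}_{2p^2}\cong\mathbb{Z}_2\times\mathbb{Z}_{p^2}$ the factor $2$ is orthogonal to the $p$-adic digits, so it does nothing for the carry. The whole carry problem stays inside $\mathbb{Z}_{p^2}$ and has to be paid for with $p$-shifted copies.

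With $k_1=k_2=2$ this cannot cover. Take $X=P_c\cup(P_c+p)$ and $Y=P_{c'}\cup(P_{c'}+p)$. Then $X+Y=(P_c+P_{c'})+\{0,p,2p\}$ involves a single quadratic pair. So $u+vp\in X+Y$ requires $(c+c')(v-j)-cc'u^2$ to be a square or $0$ for some $j\in\{0,1,2,3\}$. By Weil's bound this fails for about $p/16$ values of $v$ for each $u$. To make both classes of the dichotomy appear, each with two consecutive shifts, in $X+Y$ and in $(X+X)\cup(Y+Y)$, you need many more pieces. You give neither such a design nor a count $\leqslant 48$ for one.

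\emph{The missing idea.} Use the factor $2$ as a \emph{middle digit} rather than through the Chinese remainder theorem. Take $A=\bigcup_{i=1}^{3}\{u+pe+2pv:\ (u,v)\in\mathcal{Q}_{k_i},\ e\in\{0,1\}\}$, and write $n\equiv c+\varepsilon p+2pd$ with $\varepsilon\in\{1,2\}$, as in Lemma~\ref{lem-rep}.
\begin{itemize}
\item The carry $q$ in $u_1+u_2=c+pq$ is absorbed exactly by choosing $e_1+e_2=\varepsilon-q\in\{0,1,2\}$.
\item The pair $(c,d)$ is covered by Lemma~\ref{lemma32}.
\end{itemize}
This is the $2p^2$ analogue of Section~3 with $E_1=E_2=E_3=\{0,1\}$. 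The bookkeeping of Lemma~\ref{lemmaboundsigma} then gives $h\in\{-1,0,1\}$, and all $\omega^{\varepsilon,h}_{ij}$ coincide. Moreover $\sum_h\omega^{\varepsilon,h}_{11}=3$ for $\varepsilon=1,2$, and $W^{\varepsilon,h}=16\,\omega^{\varepsilon,h}_{11}$. Hence the bound is exactly $48$ for $p\geqslant 7$. Small $p$ are handled by a trivial construction.
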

Instead of investigating $R_{2p^2}$, the new ingredient in this paper is to work modulo $8p^2$.
In \cite{Tang3}, Tang and Chen gave some explicit upper bounds of  $R_{kp^2}$, where $1\leqslant k\leqslant 10$. In particular, they proved $R_{8p^2}\leqslant 64$.
 We prove the following stronger estimate of $R_{8p^2}$. 
\begin{theorem}\label{thm2}
Let $p$ be a prime. Then $R_{8p^2}\leqslant 32$.
\end{theorem}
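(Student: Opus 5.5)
The plan is to give an explicit construction in $\mathbb{Z}_{8p^2}\cong \mathbb{Z}_8\times\mathbb{Z}_{p^2}$, which is available since $\gcd(8,p^2)=1$ for odd $p$. For $p=2$ we have $m=32$, and $A=\mathbb{Z}_{32}$ already gives $\sigma_A(n)=32$ for every $n$, so this case is trivial. From now on let $p$ be odd.

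Write every element of $\mathbb{Z}_{p^2}$ uniquely as $x+py$ with $0\leqslant x,y\leqslant p-1$. For $\lambda\in\mathbb{Z}_p^*$ put
$$S_\lambda=\{\,x+p\,\overline{\lambda x^2}\ :\ 0\leqslant x\leqslant p-1\,\}\subseteq \mathbb{Z}_{p^2},$$
where $\overline{t}$ denotes the least nonnegative residue of $t$ modulo $p$. The candidate set will be
$$A=\bigcup_{j}\{a_j\}\times S_{\lambda_j}\subseteq \mathbb{Z}_8\times\mathbb{Z}_{p^2},$$
with a small list of pairs $(a_j,\lambda_j)$. Here $a_j\in\mathbb{Z}_8$, and $\lambda_j\in\{1,\nu\}$ with $\nu$ a fixed quadratic nonresidue; possibly $\lambda_j\in\{1,\nu,\dots\}$ if more flexibility is needed.

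The key local computation is as follows. Take $x+p\overline{\lambda x^2}$ and $y+p\overline{\mu y^2}$ from $S_\lambda$ and $S_\mu$. Their sum equals $u+pv$ exactly when
$$x+y\equiv u \pmod p,\qquad \lambda x^2+\mu y^2+c\equiv v\pmod p,$$
where $c\in\{0,1\}$ is the carry, and $c=1$ precisely when $x\geqslant u+1$ in the chosen range. Substituting $y=u-x$ gives a quadratic equation in $x$. When $\lambda+\mu\not\equiv0$, it is solvable iff a certain discriminant, namely $\lambda\mu(\lambda+\mu)v'-\lambda\mu u^2$ up to a constant with $v'=v-c$, is a square. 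It then has at most $2$ solutions. So each ordered pair of blocks contributes at most $2$ representations for each value of the carry, hence at most $4$ in total. I will also check that the carry is determined by $x$, so that the two carry cases are not double counted.

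Solvability thus reduces to a quadratic-residue condition on the pair $(u,v)$. Taking the block pair $(1,1)$ covers the $(u,v)$ with $2v'-u^2$ a square or zero. Taking $(\nu,\nu)$, or a mixed pair, covers those with $2v'-u^2$ a nonsquare times the appropriate constant.

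The global step is to choose the $\mathbb{Z}_8$ labels $a_j$ together with their multipliers $\lambda_j$. The requirement is that for every $r\in\mathbb{Z}_8$, the ordered pairs $(i,j)$ with $a_i+a_j=r$ include pairs of both quadratic types. This guarantees coverage of all $(u,v)$ regardless of the residue class of the discriminant. A convenient aim is a $4$-element multiset of labels, say with two blocks of each type, so that each $r$ has at most $8$ ordered label pairs summing to it. That would give $\sigma_A(n)\leqslant 8\cdot 4=32$, matching the target bound in Theorem~\ref{thm2}. The bound is sharpened if one notes that for a fixed pair and a fixed $(u,v)$ only one carry value can actually occur for each root.

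I expect the main obstacle to be this combinatorial design in $\mathbb{Z}_8$ together with the degenerate cases. The design must ensure that every residue mod $8$ is hit by label pairs of both quadratic characters, including after the carry shifts $v$ to $v-1$, while keeping the multiplicity of each sum small. The degenerate cases are $\lambda+\mu\equiv0$, which I will avoid by never pairing $\lambda$ with $-\lambda$ (using $\nu$ with $-\nu$ a nonresidue if $p\equiv1\pmod4$, and adjusting otherwise), and the case where the discriminant is $0$. My first attempt is labels $\{0,1,2,5\}$-type sets whose sumset is $\mathbb{Z}_8$, assigning $\lambda=1$ and $\lambda=\nu$ alternately. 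I will then verify the square/nonsquare coverage case by case, splitting according to $p\bmod 4$.
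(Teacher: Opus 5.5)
\textbf{There is a genuine gap: the carry inside $\mathbb{Z}_{p^2}$ is never absorbed, and no choice of labels or multipliers repairs this.}

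In the CRT coordinates $\mathbb{Z}_8\times\mathbb{Z}_{p^2}$ your blocks $\{a_j\}\times S_{\lambda_j}$ are products. Nothing in the $\mathbb{Z}_8$-coordinate interacts with the carry from $x+y$ into the $p$-digit.

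Look at the element $p\in\mathbb{Z}_{p^2}$. A representation $x+p\overline{\lambda x^2}+y+p\overline{\mu y^2}\equiv p \pmod{p^2}$ needs $x+y\equiv 0\pmod p$. The choice $x=y=0$ gives $0$. Otherwise $x+y=p$, and the sum is $\equiv p\bigl(1+(\lambda+\mu)x^2\bigr)\pmod{p^2}$. This equals $p$ if and only if $\lambda+\mu\equiv 0\pmod p$, and in that case all $p-1$ values of $x$ work. So for each $r\in\mathbb{Z}_8$, either $(r,p)\notin A+A$, or $\sigma_A(r,p)\geqslant p-1$. For $p>33$, no set of your shape proves the theorem.

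The same computation shows that your covering criterion is not a quadratic-residue condition. The carry $c$ is not a free parameter: it is forced by the root, with $c=1$ if and only if $x>u$. For a pair $(\lambda,\lambda)$ the two roots $x$ and $u-x$ even share the same carry. Thus $(u,v)=(0,1)$ is missed by the pair $(1,1)$ although $2v-u^2=2$ is a square (e.g.\ for $p=7$).

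The idea you are missing is the paper's mixed-radix (not CRT) representation $u+pe+8pv$. There the carry $q\in\{0,1\}$ of $u_1+u_2$ lands in the middle digit. Since $E_1+E_2=E_3+E_3=\{0,1,\dots,8\}$, one can take $e_1+e_2=\varepsilon-q$ for whichever $q$ occurs. The top digit then sees exactly $v_1+v_2=d$, and Chen's dichotomy (Lemma~\ref{lemma32}) applies verbatim. The upper bound only has to track the secondary carry $h\in\{-1,0,1,2\}$ into the top digit, which is what the quantities $\omega^{\varepsilon,h}_{ij}$ and the tables do.

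\textbf{Secondary problems, even setting the carry aside.}

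First, the discriminant is $(\lambda+\mu)v'-\lambda\mu u^2=(\lambda+\mu)\bigl(v'-\tfrac{\lambda\mu}{\lambda+\mu}u^2\bigr)$; there is no factor $\lambda\mu$ on the first term. Two pair types are complementary only if they have the same center $\tfrac{\lambda\mu}{\lambda+\mu}$ and values of $\lambda+\mu$ of opposite quadratic character. For $(1,1)$, $(\nu,\nu)$ and $(1,\nu)$ the centers $\tfrac12$, $\tfrac{\nu}{2}$, $\tfrac{\nu}{1+\nu}$ are pairwise distinct, so no two of your types complement each other. This is why Chen takes $k_1=m+1$, $k_2=m(m+1)$, $k_3=2m$: the pairs $(k_1,k_2)$ and $(k_3,k_3)$ both have center $m$, with sums $(m+1)^2$ and $4m$.

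Second, four blocks give only $10$ unordered index pairs. Requiring two differently typed pairs for each of the $8$ residues needs $16$. With labels $\{0,1,2,5\}$, residue $0$ arises only from $(0,0)$, and $|S_\lambda+S_\lambda|\leqslant p(p+1)/2<p^2$, so residue $0$ cannot be covered. Hence the count $8\cdot 4=32$ is not attached to any admissible configuration.

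Third, your proposed sharpening does not lower the per-pair bound $4$. The two carry values belong to two different quadratic equations, each of which can contribute two roots.

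Finally, $p=3$ and $p=5$ need separate handling. The paper uses $\{0,\dots,2p-1\}\cup\{2pt:0\leqslant t<4p\}$, of size $6p-1\leqslant 29$. Your treatment of $p=2$ via $A=\mathbb{Z}_{32}$ is fine.
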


The construction behind Theorem \ref{thm2} has two layers. The first layer consists of three quadratic graphs $\mathcal Q_{k_i}\subseteq \mathbb Z_p^2$, chosen so that Chen's quadratic-residue dichotomy guarantees coverage. The second layer consists of three small integer sets $E_1,E_2,E_3$ whose pairwise sums fill the interval $\{0,1,\ldots,8\}$. This additional residue layer is what allows us to pass from a construction modulo $2p^2$ to one modulo $8p^2$ while reducing the local multiplicity bound from $48$ to $32$. Finally, a comparison lemma of Ding and Zhao transfers this local estimate to arbitrary moduli and gives Theorem \ref{thm1}.

S\'{a}ndor and Yang \cite{SandorYang} gave the nontrivial lower bound of $R_m$. In particular, it is proved in \cite{SandorYang} that $R_m\geqslant 6$ for all sufficiently large $m$. For a different proof of the mean value estimate \eqref{R-bound}, one may refer to Chen and Yang \cite{ChenYang}.

Concerning the topic of the asymptotic basis in natural numbers, Grekos, Haddad, Helou and Pihko \cite{GHHP} showed that for an asymptotic basis $A$, the representation function $f_{A}(n)\geqslant 6$ for infinitely many $n$. It was then improved to $f_{A}(n)\geqslant 8$ infinitely often by Borwein, Choi and Chu \cite{BCC}. 
Chen \cite{Chen2} proved the existence of an asymptotic basis $A$ in the natural numbers which satisfies that the set of $n$ with $f_{A}(n)=2$ has density $1$. For more information on the Erd\H os--Tur\'{a}n conjecture, one can refer to Halberstam and Roth \cite{HalRo}, see also Tao and Vu \cite{Tao-Vu}.

We shall prove Theorem \ref{thm1} by using Theorem \ref{thm2} in Section 2 and prove Theorem \ref{thm2} in Section 3. 

\section{Proof of Theorem \ref{thm1} based on Theorem \ref{thm2}}
Before presenting the proof of Theorem \ref{thm1}, we prepare some lemmas.

We introduce a finite sequence as follows
$$
  c_j=74j-\left\lfloor \frac{j^2}{32}\right\rfloor\qquad (0\leqslant j\leqslant 264),
$$
where $\lfloor x\rfloor$ means the greatest integer no more than $x$. 
We first list some simple facts about the finite sequence $c_j$.  
Note that 
\begin{align}\label{cmax}
  c_{264}=74\times 264-\left\lfloor \frac{264^2}{32}\right\rfloor
  =19536-2178=17358>17350=132^2-74.
\end{align}
We record the elementary gap estimates
\begin{align}\label{gap}
  57\leqslant c_{j+1}-c_j\leqslant 74\ \textrm{ for }\ 0\leqslant j\leqslant 263.
\end{align}
The following finite fact is elementary. For reproducibility, the short verification code is included in Appendix~\ref{app:verification}.
\begin{lemma}\label{lem1}One has
$$
  \max_{n} \Big|\big\{(i,j):0\leqslant i,j\leqslant 264,
  \ c_i+c_j=n\big\}\Big|=17. 
$$
\end{lemma}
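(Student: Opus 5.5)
Lemma~\ref{lem1} is a finite statement about the $265$ numbers $c_0<c_1<\cdots<c_{264}$, so the proof I propose is an exhaustive computation, organised so that it can also be checked by hand in outline. First I will confirm that the sequence is strictly increasing, with the gaps given by \eqref{gap}. Since
$$c_{j+1}-c_j=74-\Big(\Big\lfloor \tfrac{(j+1)^2}{32}\Big\rfloor-\Big\lfloor \tfrac{j^2}{32}\Big\rfloor\Big)$$
and the bracketed difference lies between $0$ and $\lceil (2j+1)/32\rceil\leqslant 17$ for $j\leqslant 263$, every $c_j$ is distinct. In particular all sums $c_i+c_j$ lie in $[0,2c_{264}]=[0,34716]$.

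The computation itself is simple. Initialise an integer array $N[0..34716]$ to zero. Loop over all ordered pairs $(i,j)$ with $0\leqslant i,j\leqslant 264$, which is $265^2=70225$ pairs, and increment $N[c_i+c_j]$ for each. Then output $\max_n N[n]$ together with a witness $n$ and the $17$ pairs $(i,j)$ attaining it. This is exactly the code in Appendix~\ref{app:verification}. The upper bound and the attainment are proved together, because the array records every $n$.

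For a conceptual check on why the answer is about $17$, note that $c_j\approx 74j-j^2/32$ is a concave quadratic. For a fixed $n$, the pairs $(i,j)$ with $c_i+c_j=n$ lie near the curve $74(i+j)-(i^2+j^2)/32=n$. Writing $s=i+j$ and $d=i-j$, this becomes $74s-(s^2+d^2)/64\approx n$. So for fixed $s$, varying $d$ shifts the value by $\asymp d^2/64$. Meanwhile, changing $s$ by one shifts it by roughly $74-s/32\geqslant 57$. Hence only $s$ within $O(1)$ of the solution and a bounded window of $d$ contribute. Combined with the floor-function rounding, this explains a bounded count of order a few dozen at most.

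The only real obstacle is making the exact constant $17$ rigorous by hand. The floor terms make an analytic count delicate, so I will rely on the exhaustive enumeration rather than attempt a closed-form argument.
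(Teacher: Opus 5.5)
Your proposal is correct and takes the same approach as the paper, which justifies the lemma only by the exhaustive count over all $265^2$ ordered pairs in the script of Appendix~\ref{app:verification}; your preliminary check of strict monotonicity via \eqref{gap} is also accurate. The heuristic paragraph plays no role in the proof, and it is somewhat loose: pairs with $|i-j|$ large and $i+j$ as many as roughly $17$--$19$ steps from the central value also contribute, so the window of $d$ is not small.
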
 
Now we are ready to deal with $R_m$ for small $m$. 
\begin{lemma}\label{lem2}
For $m\leqslant 132^2$, we have {$R_m\leqslant 116$}.
\end{lemma}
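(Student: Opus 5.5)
The plan is to take $A$ to be a short initial interval together with the elements $c_j$ that are smaller than $m$, and count representations by splitting pairs according to which part each summand comes from. For $m\leqslant 116$ we can simply take $A=\mathbb Z_m$, since then $\sigma_A(n)=m\leqslant 116$. For $116<m\leqslant 132^2$ we identify $\mathbb Z_m$ with $\{0,1,\ldots,m-1\}$ and set
$$
A=\{0,1,\ldots,73\}\ \cup\ \{c_j:\ 0\leqslant j\leqslant 264,\ c_j<m\}\pmod m .
$$
The intended split of the bound is $116=74+34+8$, where the three pieces come from interval–interval pairs, $c$–$c$ pairs and mixed pairs respectively.

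\emph{Covering.} First, $c_0=0<m$. By \eqref{cmax} and $m\leqslant 132^2$ we have $c_{264}>m-74$. Together with the gap bound \eqref{gap}, this shows that the largest $c_j<m$ is at least $m-74$. Every $n\in[0,m-1]$ lies in some $[c_j,c_j+73]$ with $c_j<m$: if $c_j\leqslant n<c_{j+1}$ this follows from $c_{j+1}-c_j\leqslant 74$, and if $n$ exceeds every admissible $c_j$ it follows from the lower bound on the largest one. Hence $n=c_j+t$ with $t\in\{0,\ldots,73\}$, and $A+A=\mathbb Z_m$.

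\emph{Counting.} Fix $n\in\mathbb Z_m$. Every sum of two representatives lies in $[0,2m-2]$, so an ordered pair $(x,y)\in A^2$ with $x+y\equiv n$ satisfies $x+y=n$ or $x+y=n+m$ as integers.
\begin{enumerate}[(i)]
\item Pairs with both entries in the interval have integer sum in $[0,146]$. For a fixed lift $N$ there are at most $74$ ordered pairs. When $m\leqslant 146$ both lifts can occur, so this case needs slightly more care: one checks that the total count over the two lifts is still at most $74$. It is at most $\min(N+1,147-N)$ for each lift, and the two lifts differ by $m>116$, so their combined count stays within about $74$. I would also verify this small range directly.
\item Pairs with both entries among the $c_j$: by Lemma~\ref{lem1}, each of the two lifts has at most $17$ ordered representations, giving at most $34$.
\item Mixed pairs: $c_j\equiv n-t$ with $t\in[0,73]$, so $c_j$ lies in a cyclic arc of length $74$. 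Because consecutive $c_j$ differ by at least $57$, an arc of length $74$ in the integers contains at most $2$ of them. The wrap-around gap between the largest admissible $c_j$ and $m$ may be small, so the arc can contain at most about $2$ further points near the boundary. Counting both orders, and both the interval element and the $c$ element being the first summand, I expect a bound of $8$.
\end{enumerate}
Totalling, $\sigma_A(n)\leqslant 74+34+8=116$.

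The main obstacle is making the mixed-pair and wrap-around count rigorous without exceeding $8$. The danger is the cyclic gap between the largest $c_j<m$ and $c_0=0$, which may be much smaller than $57$. The first thing I would try is to note that the arc of $74$ consecutive residues is a single integer interval, or a union of two pieces at the top and bottom. The top piece lies in $[m-74,m-1]$ and meets at most $2$ of the $c_j$. The bottom piece lies in $[0,73]$ and contains only $c_0=0$ and possibly $c_1=74$, which is not in it. So the arc contains at most $3$ of the $c_j$, and a combined count with this sharper accounting should give at most $8$. If this fails, I would instead drop the last $c_j$ whenever it lies within $57$ of $m$. Covering is preserved because the previous $c_j$ is still at least $m-74-74$, and the interval $[0,73]$ can be enlarged slightly; I would then re-check the constants.
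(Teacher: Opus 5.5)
Your proposal is correct and is essentially the paper's argument: the same set $\{0,\ldots,73\}\cup\{c_j\}$, the same split $74+34+8$, and Lemma~\ref{lem1} applied to the two integer lifts $n$ and $n+m$; the paper differs only in handling $m\leqslant 400$ by a separate two-scale set of size at most $40$, and in keeping only $c_1,\ldots,c_\beta$ with $\beta$ minimal such that $c_\beta\geqslant m-74$, and neither difference matters. The wrap-around worry in (iii) is not a real obstacle, since your own arc count already gives at most $3$ of the $c_j$ per arc and hence at most $6$ mixed pairs, while the paper avoids it altogether by using the two lifts here as well: for each $n'\in\{n,n+m\}$ the window $[n'-73,n']$ contains at most two $c_j$ by \eqref{gap}, so each order of mixed pair contributes at most $4$ (and (i) is immediate by fixing the first summand).
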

\begin{proof}
For $m\leqslant 400$, let 
$$
A=\{0,1,\ldots,19\}\cup\{20,2\times 20,\ldots,19\times 20\}\pmodtight{m}.
$$
 It is easy to see that $A+A=\mathbb{Z}_{m}$ and $R_m\leqslant |A|\leqslant 40$.

Now, we assume that $400<m\leqslant 132^2$. Let $\beta=\beta_m$ be the smallest positive integer such that
\begin{align}\label{definebeta}
  c_{\beta}\geqslant m-74.
\end{align}
In view of \eqref{cmax}, we have $\beta\leqslant 264$. Moreover, we have $c_{\beta-1}<m-74$ and thus by \eqref{gap}, 
$$
c_{\beta}\leqslant c_{\beta-1}+74<m.
$$
 In particular, 
$c_1,\ldots,c_\beta$ is a sequence contained in $\{74,\ldots, m-1\}$. Now define
$$
  B=\{0,1,\ldots,73\},\qquad C=\{c_1,\ldots,c_{\beta}\}, \qquad \textrm{and }  \quad  A=B\cup C.
$$
Note that $A\subseteq \{0,1,\ldots,m-1\}$ and we can view $A$ as a subset of $\mathbb{Z}_m$ in the standard way. And we shall not distinguish the notation $A$ and $A\pmodtight{m}$. 
We wish to show that $1\leqslant \sigma_{A}(n)\leqslant 116$ for all $n\in \mathbb{Z}_m$. Due to \eqref{gap} and \eqref{definebeta}, by letting $C_0=C\cup\{0\}$, we have
\begin{align}\label{eq-new-1}
B+C_0\supseteq \{0,1,2,\ldots,m-1\}.
\end{align}
In fact, for $0\leqslant n\leqslant 73$ we have 
$$
n=n+0\in B+C_0.
$$
 For $74\leqslant n<m-75$, there is some $j$ such that
$
c_j\leqslant n<c_{j+1}.
$
Then $0\leqslant n-c_j<c_{j+1}-c_j\leqslant 74$. Hence, $n-c_j\leqslant B$ and 
$$
n=(n-c_j)+c_j\in B+C\subset B+C_0. 
$$
For $m-75\leqslant n\leqslant c_\beta-1$, we have
$$
n=c_{\beta-1}+(n-c_{\beta-1})\in C+B\subset B+C_0.
$$
For $c_\beta\leqslant n\leqslant m-1$, we have
$$
n=c_\beta+(n-c_\beta)\in C+B\subset B+C_0,
$$
completing the proof of \eqref{eq-new-1}.
In particular, from \eqref{eq-new-1} we get $A+A=\mathbb{Z}_m$. 

Let $0\leqslant n<m$. Note that 
\begin{align}\label{boundsigmaA}\sigma_{A}(n)\leqslant \sigma_{B}(n)+2\sigma_{B,C}(n)+\sigma_{C}(n),
\end{align}
where in the above inequality, we consider the additive problem over $\mathbb{Z}_m$. 

Firstly, we have $\sigma_{B}(n)\leqslant |B|= 74$. Now we deal with $\sigma_{B,C}(n)$. 
If $n\equiv x+y\pmodtight{m}$ with $x\in B$ and $y\in C$, then either $n=x+y$ or $n+m=x+y$. Let us consider $n'=x+y \in \mathbb{N}$. Then $y\in C\cap[n'-73,n']$. By the gap inequality \eqref{gap}, we have $|C\cap[n'-73,n']|\leqslant 2$. Thus the number of solutions to $n'= x+y$ in $\mathbb{N}$ with $x\in B$ and $y\in C$ is at most $2$. Since as discussed above, $n'$ can be $n$ or $n+m$, we obtain $\sigma_{B,C}(n)\leqslant 4$. Similarly, if $n\equiv x+y\pmodtight{m}$ with $x,y\in C$, then we also have either $n=x+y$ or $n+m=x+y$. In view of Lemma \ref{lem1}, we conclude that 
$\sigma_{C}(n)\leqslant 2\times 17=34$. Now recalling \eqref{boundsigmaA}, we deduce that 
$$\sigma_{A}(n)\leqslant 74+2\times 4+34=116.$$
The proof of the lemma is complete.
\end{proof}

\begin{lemma}[{\cite[Lemma 2.3]{DZ24}}]\label{lem3}
For any real number $x\geqslant 33$, there exists at least one prime in the interval $\left(x,\frac{2x}{\sqrt{3}}\right]$.
\end{lemma}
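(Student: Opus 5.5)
This is an explicit short-interval version of Bertrand's postulate. The plan is to split into a large range, handled by a published explicit prime-gap estimate, and a small range, handled by a finite chain of primes. Throughout, write $\lambda=2/\sqrt3\approx 1.1547$.

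\textbf{Large $x$.} I would invoke Dusart's explicit estimate: for every real $x\geqslant 3275$ there is a prime $q$ with
$$x<q\leqslant x\Big(1+\frac{1}{2\log^2 x}\Big).$$
Any comparable result of Schoenfeld, Ramar\'e--Saouter or Dusart type would serve equally well. For $x\geqslant 3275$ we have $\log x>8.09$, so $1+1/(2\log^2 x)<1.008<\lambda$. Hence the prime $q$ lies in $(x,\lambda x]$, and the case $x\geqslant 3275$ is done.

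\textbf{Small $x$, $33\leqslant x<3275$.} This range is a finite verification by a chain of primes. I would exhibit primes $q_0<q_1<\dots<q_N$ with the following properties:
$$q_0\leqslant 33<q_1\leqslant 33\lambda\approx 38.1,\qquad q_{i+1}\leqslant \lambda q_i \text{ for } 1\leqslant i<N,\qquad q_N>3275.$$
For instance one can take $q_0=31$, then $37, 41, 47, 53, 59, 67, 73, 83, 89, 101,\dots$, each time choosing the largest prime not exceeding $\lambda$ times the previous one. Such a prime always exists in this range, because prime gaps below $4000$ are far smaller than $0.15\,q_i$.

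Given $x\in[33,3275)$, take the index with $q_i\leqslant x<q_{i+1}$, where $q_0$ covers the range $[33,37)$. Then $q_{i+1}>x$ and $q_{i+1}\leqslant\lambda q_i\leqslant\lambda x$, so $q_{i+1}\in(x,\lambda x]$. For the initial segment $[33,37)$ we use $q_1=37\leqslant 33\lambda$ directly. The interval is open at $x$, and this causes no difficulty because we always use the next prime strictly above $x$.

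\textbf{Main obstacle.} There is nothing conceptually difficult here. The only delicate point is the citation: I must quote the explicit short-interval theorem accurately, with the correct threshold and the correct shape of the interval. The chain verification is routine and can be done by computer, in the same spirit as the appendix code for Lemma~\ref{lem1}.
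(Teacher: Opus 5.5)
The paper gives no argument of its own for this lemma. It is quoted verbatim from Ding--Zhao (their Lemma 2.3), so your proof is effectively the only one on the page, and it is correct.

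Dusart's thesis bound is quoted in the right form: for $x\geqslant 3275$ there is a prime $p$ with $x<p\leqslant x\bigl(1+1/(2\log^2 x)\bigr)$. The numerics are also right: $\log 3275\approx 8.094$, and $1+1/(2\log^2 x)<1.008<2/\sqrt{3}$.

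The chain argument for $33\leqslant x<3275$ is sound. This includes the special first link, where you use $37\leqslant 33\cdot 2/\sqrt{3}\approx 38.1$ rather than $37\leqslant \tfrac{2}{\sqrt{3}}\cdot 31$, which is false. That special step is genuinely needed. For $31\leqslant x<37\sqrt{3}/2\approx 32.04$ the interval $(x,2x/\sqrt{3}]$ contains no prime, so the threshold $33$ is essentially sharp. Your write-up makes this visible, which the bare citation does not.

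One harmless slip: your listed chain breaks your own greedy rule at $53$. The largest prime not exceeding $53\cdot 2/\sqrt{3}\approx 61.2$ is $61$, not $59$. The listed chain still satisfies the required condition, because $59\leqslant 61.2$ and $67\leqslant 59\cdot 2/\sqrt{3}\approx 68.1$.

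Compared with simply citing the lemma, your route depends on Dusart's explicit theorem plus a finite, computer-checkable verification. In exchange, it gives a proof that can be checked without consulting the earlier paper, in the same spirit as the appendix verification of Lemma~\ref{lem1}.
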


\begin{proposition}
[{\cite[Lemma 2.4]{DZ24}}]\label{prop}
Let $m_1$ and $m_2$ be two positive integers with $\frac{3}{2}m_1\leqslant m_2< 2m_1$. 
Then $$R_{m_2}\leqslant 4R_{m_1}.$$
\end{proposition}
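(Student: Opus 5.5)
Given $A_1\subseteq\mathbb Z_{m_1}$ with $1\leqslant\sigma_{A_1}(n)\leqslant R_{m_1}$ for every $n\in\mathbb Z_{m_1}$, we lift $A_1$ to $\mathbb Z_{m_2}$ by a short "two-shift" construction. Identify $A_1$ with a subset of $\{0,1,\ldots,m_1-1\}$ and put $d=m_2-m_1$, so that $\tfrac12 m_1\leqslant d<m_1$. We take
$$A=A_1\cup(A_1+d)\subseteq\mathbb Z_{m_2},$$
viewed through the integer representatives in $[0,m_2)$; this is a union of two translates. Since $A$ is a union of two sets, we have
$$\sigma_A(n)\leqslant\sigma_{A_1}(n)+2\sigma_{A_1,A_1+d}(n)+\sigma_{A_1+d}(n),$$
and each of the four terms is a translated copy of $\sigma_{A_1}$. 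The proof then has two parts.

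\textbf{Upper bound.} For $x,y\in A_1$ the integer sum $x+y$ lies in $[0,2m_1-2]$. Reducing modulo $m_2$, the $n\in\mathbb Z_{m_2}$ coming from a fixed translate arise from at most two integer values $s$ with $s\equiv n\pmod{m_2}$, namely $s$ and $s+m_2$, and $2m_1<\tfrac43 m_2$ keeps this to at most two. For each such integer $s$, the pairs $(x,y)\in A_1^2$ with $x+y=s$ are among the representations of $s\bmod m_1$ in $\mathbb Z_{m_1}$, so there are at most $R_{m_1}$ of them. Carried out naively this gives $2R_{m_1}$ per term and $8R_{m_1}$ in total, which is too weak. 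To reach $4R_{m_1}$ we need, for each $n$ and each of the four sum-types (shifts $0,d,d,2d$), that the integer lifts $s$ and $s+m_2$ correspond to distinct residues mod $m_1$ whose representations are disjoint. We will therefore count more carefully: pairs with $x+y=s$ and pairs with $x+y=s+m_2$ give different residues mod $m_1$ (they differ by $d\not\equiv0\pmod{m_1}$). Even so, the total $\sigma_{A_1}(s)+\sigma_{A_1}(s+d)$ could reach $2R_{m_1}$. So the real saving has to come from the fact that for a given $n$ only some shift-types can contribute two lifts. For example, with shift $0$, $s\leqslant 2m_1-2$ and $s+m_2$ both in range forces $s\leqslant 2m_1-2-m_2\leqslant \tfrac12 m_1$. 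Balancing these range restrictions using $m_2\geqslant\tfrac32 m_1$ is the main obstacle, and I would handle it by a case split on $n$ into intervals of length about $d$.

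\textbf{Covering.} For $n\in[0,m_2)$, if $n<m_1$ write $n\equiv x+y\pmod{m_1}$ with $x,y\in A_1$. Then $x+y\in\{n,n+m_1\}$. If $x+y=n$ we are done. If $x+y=n+m_1$, then $n+m_1=n+m_2-d$, so $x+(y+d)\equiv n\pmod{m_2}$ with $y+d\in A$. If $m_1\leqslant n<m_2$, so $n-d\in[m_1-d,m_1)$, apply the same argument to $n-d$ and add the shift $d$: either $x+y=n-d$, giving $n=x+(y+d)$, or $x+y=n-d+m_1=n+m_2-2d$, giving $n\equiv(x+d)+(y+d)$. Both lie in $A+A$, so $A+A=\mathbb Z_{m_2}$.

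If the refined counting above fails, I would instead shrink the construction to use only $A_1\cup(A_1+d)$ restricted to the representatives needed for covering. Alternatively I would use a dilation argument replacing the sum-type pairs by injective maps into $\mathbb Z_{m_1}$ representations, each used at most four times in total. This is exactly where the hypothesis $\tfrac32m_1\leqslant m_2<2m_1$ (rather than just $m_2<2m_1$) should enter.
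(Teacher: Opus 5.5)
There is a genuine gap. Your set $A=A_1\cup(A_1+d)$ is a good choice, and your covering argument is correct. However, the upper bound $\sigma_A(n)\leqslant 4R_{m_1}$ is the entire content of the proposition (the paper imports it from Ding--Zhao without proof), and you never establish it: you call it ``the main obstacle'' and list fallbacks.

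More importantly, the mechanism you propose cannot work on its own. Put $R=R_{m_1}$ and $f(s)=\#\{(x,y)\in A_1^2:\ x+y=s\}$. Carrying out your lift analysis for the four sum-types (shifts $0,d,d,2d$) gives
\[
\sigma_A(n)\leqslant f(n)+f(n+m_2)+2f(n-d)+2f(n+m_1)+f(n-2d)+f(n+m_1-d).
\]
Range restrictions, plus the fact that $s$ and $s+m_2$ lie in different classes mod $m_1$, only let you bound each $f(s)\leqslant R$ separately. With that, the middle range $d\leqslant n<m_1$ contributes $f(n)+2f(n+m_1)+2f(n-d)+f(n-d+m_1)\leqslant 6R$. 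Generically all four of these lifts lie in $[0,2m_1-2]$, so no case split on $n$ improves this.

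The missing idea is to pair lifts from \emph{different} sum-types that agree modulo $m_1$. Since $0\leqslant x+y\leqslant 2m_1-2$, one has $f(r)+f(r+m_1)=\sigma_{A_1}(r)\leqslant R$ for $0\leqslant r<m_1$. Two pairings follow:
\begin{enumerate}
\item The lift $n$ of $A_1+A_1$ pairs with the lift $n+m_1$ of $A_1+(A_1+d)$. Hence $f(n)+2f(n+m_1)\leqslant 2R$ for $0\leqslant n<m_1$.
\item The lift $n-d$ of $A_1+(A_1+d)$ pairs with the lift $n-d+m_1$ of $(A_1+d)+(A_1+d)$. Hence $2f(n-d)+f(n-d+m_1)\leqslant 2R$ for $d\leqslant n<m_2$.
\end{enumerate}
Now split into three ranges:
\begin{enumerate}
\item For $0\leqslant n<d$: $f(n-d)=f(n-2d)=0$, and the total is at most $2R+R+R$.
\item For $d\leqslant n<m_1$: the hypothesis $2d\geqslant m_1$ gives $n+m_2\geqslant m_1+2d\geqslant 2m_1$ and $n<m_1\leqslant 2d$. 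So $f(n+m_2)=f(n-2d)=0$, and the total is at most $2R+2R$.
\item For $m_1\leqslant n<m_2$: $f(n+m_1)=f(n+m_2)=0$, and the total is at most $R+2R+R$.
\end{enumerate}
Thus $\sigma_A(n)\leqslant 4R$. The hypothesis $m_2\geqslant \frac32 m_1$ is used exactly in the middle range; without it that range gives $6R$. With this step inserted your proof is complete, and the ``shrinking'' and ``dilation'' alternatives are unnecessary.
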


\begin{proof}[Proof of Theorem \ref{thm1} by assuming Theorem \ref{thm2}]
By Lemma \ref{lem2}, we only need to consider $m> 132^2$.
By this condition we shall have $\sqrt{m}/4> 33$. By Lemma \ref{lem3}, there exists a prime $p$ such that
$$\frac{\sqrt{m}}{4}< p\leqslant \frac{2}{\sqrt{3}}\times \frac{\sqrt{m}}{4}.$$
In other words, we have $$\frac{3}{2}\times 8p^2\leqslant m< 2\times 8p^2.$$ Therefore, it follows from Theorem \ref{thm2} and Proposition \ref{prop} that
$$R_{m}\leqslant 4R_{8p^2}\leqslant 128.$$

This completes the proof of Theorem \ref{thm1}.
\end{proof}

\section{Proof of Theorem \ref{thm2}}

For an integer $k$, we introduce 
\begin{align*}\mathcal{Q}_k=\big\{(u,ku^2)\in \mathbb{Z}_p^2:\ u\in \mathbb{Z}_p\big\}.\end{align*}
\begin{lemma}[{\cite[Lemma 2.1]{Ruzsa}}]\label{lemma31}Let $k,\ell$ be two integers and $p$ be a prime satisfying $p\nmid k+\ell$. Let $(c,d)\in \mathbb{Z}_p^2$. Let $r(c,d)$ denote the number of solutions to the following equation
\begin{align*}(c,d)=(u_1,v_1)+(u_2,v_2) \ \textrm{ with }\ (u_1,v_1)\in \mathcal{Q}_{k} \ \textrm{ and } \ (u_2,v_2)\in \mathcal{Q}_{\ell}.\end{align*}
Let 
\begin{align*}\Delta(c,d)=(k+\ell)d-k\ell c^2.\end{align*}
Then we have
\begin{align*}r(c,d)=\begin{cases}2  \ & \textrm{ if }\ \Big(\frac{\Delta(c,d)}{p}\Big)=1
\\ 1  \ & \textrm{ if }\ \Big(\frac{\Delta(c,d)}{p}\Big)=0
\\ 0  \ & \textrm{ if }\ \Big(\frac{\Delta(c,d)}{p}\Big)=-1 \end{cases},\end{align*}
where $\Big(\frac{\Delta}{p}\Big)$ is the Legendre symbol.
\end{lemma}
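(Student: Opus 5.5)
The plan is a direct count: fix $(c,d)\in\mathbb{Z}_p^2$, write down all pairs $(u_1,u_2)\in\mathbb{Z}_p^2$ solving the system, and reduce it to a quadratic in one variable whose number of roots is read off from the Legendre symbol of its discriminant. Each solution is determined by $(u_1,u_2)$, because $v_1=ku_1^2$ and $v_2=\ell u_2^2$. So $r(c,d)$ is the number of pairs with
\begin{align*}
u_1+u_2=c,\qquad ku_1^2+\ell u_2^2=d \quad\text{in }\mathbb{Z}_p.
\end{align*}

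Substituting $u_2=c-u_1$ into the second equation gives a single-variable equation in $u_1$:
\begin{align*}
(k+\ell)u_1^2-2\ell c\,u_1+\ell c^2-d=0 .
\end{align*}
Since $p\nmid k+\ell$, this is a genuine quadratic over $\mathbb{Z}_p$. Each root $u_1$ determines $u_2$ uniquely, so $r(c,d)$ equals the number of roots of this quadratic in $\mathbb{Z}_p$.

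Assume first that $p$ is odd. Multiplying by $k+\ell$ and completing the square gives
\begin{align*}
\big((k+\ell)u_1-\ell c\big)^2=\ell^2c^2-(k+\ell)(\ell c^2-d)=(k+\ell)d-k\ell c^2=\Delta(c,d).
\end{align*}
The map $u_1\mapsto (k+\ell)u_1-\ell c$ is a bijection of $\mathbb{Z}_p$. Hence the number of roots equals the number of $w\in\mathbb{Z}_p$ with $w^2=\Delta$. That number is $1+\big(\frac{\Delta}{p}\big)$, i.e. $2$, $1$ or $0$ according as $\Delta$ is a nonzero square, zero, or a nonsquare. This gives all three cases of the formula.

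The one point needing care is $p=2$, where division by $2$ fails. There I would check directly: the Legendre symbol mod $2$ is $1$ for odd $\Delta$ and $0$ for even $\Delta$, and the value $-1$ never occurs. Since $u^2=u$ in $\mathbb{Z}_2$, the hypothesis that $k+\ell$ is odd means exactly one of $k,\ell$ is odd, and the system becomes linear. Its solution count can then be compared with $1+\big(\frac{\Delta}{2}\big)$ by listing the four choices of $(c,d)$. I expect the $p=2$ formula can fail in some configuration. In that case I would note that the lemma is only ever applied with $p$ odd (small $p$ being absorbed elsewhere), and restrict the statement to odd primes.
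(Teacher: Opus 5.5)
Your argument is correct and is the standard proof of this result. The paper gives no proof of its own and simply quotes Ruzsa's Lemma 2.1. Your route is the usual one: eliminate $u_2=c-u_1$, use $p\nmid k+\ell$ to complete the square to $\big((k+\ell)u_1-\ell c\big)^2=\Delta(c,d)$, and count square roots of $\Delta$.

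Your suspicion about $p=2$ is justified, and it can be made definite. Since $x^2=x$ in $\mathbb{Z}_2$ and $k+\ell$ is odd, the quadratic reduces to $u_1=d-\ell c$. So $r(c,d)=1$ for every $(c,d)$, and the formula fails whenever $\Delta$ is odd. The lemma, which involves a Legendre symbol anyway, should therefore be read for odd $p$. That suffices here, since the paper only applies it for $p\geqslant 7$.
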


Throughout this section, for an odd prime $p\geqslant 7$, we shall choose $m$ to be a quadratic non-residue modulo $p$ with
\begin{align*}m+1\not\equiv 0\pmodtight{p},\ 3m+1\not\equiv 0\pmodtight{p}\ \textrm{and} \ m+3\not\equiv 0\pmodtight{p}.\end{align*}
When $p>7$, the existence of such $m$ is clear since the number of quadratic non-residues modulo $p$ is $\frac{p-1}{2}$ which is greater than $3$. When $p=7$, we can choose $m=3$ (or $m=5$). We remark that when $p=5$, there is no $m$ satisfying the above four conditions.
And throughout, we fix
\begin{align*}k_1=m+1,\ k_2=m(m+1),\ k_3=2m.\end{align*}
Indeed,
\begin{align*}
 k_1+k_1&=2(m+1),& k_2+k_2&=2m(m+1),& k_3+k_3&=4m,\\
 k_1+k_2&=(m+1)^2,& k_1+k_3&=3m+1,& k_2+k_3&=m(m+3).
\end{align*}
The choice of $m$ makes all these quantities non-zero modulo $p$. Hence
\begin{align*}p\nmid k_i+k_j \ \textrm{ for all }\ 1\leqslant i,j\leqslant 3.\end{align*}
Now we renew the notations in Lemma \ref{lemma31} to indicate the dependence on $k_i$ and $k_j$.
Let $r_{ij}(c,d)$ denote the number of solutions to the following equation
\begin{align*}(c,d)=(u_1,v_1)+(u_2,v_2) \ \textrm{ with }\ (u_1,v_1)\in \mathcal{Q}_{k_i} \ \textrm{ and } \ (u_2,v_2)\in \mathcal{Q}_{k_j}.\end{align*}
and let 
\begin{align*}\Delta_{ij}(c,d)=(k_i+k_j)d-k_ik_jc^2.\end{align*}
\begin{lemma}[{Chen \cite{Chen1}}]\label{lemma32}For any $(c,d)\in \mathbb{Z}_p^2$, we have either $r_{12}(c,d)\geqslant 1$ or $r_{33}(c,d)\geqslant 1$. More precisely, by setting $\delta=d-mc^2$, we have
$$
\begin{array}{|c|c|c|c|}\hline
 & r_{12}(c,d)& r_{21}(c,d)& r_{33}(c,d)\\
\hline
 \delta\text{ is a quadratic residue} &2&2&0\\ 
 \hline
 \delta\text{ is a quadratic non-residue} &0&0&2\\ 
 \hline
 \delta\equiv 0\pmodtight{p}&1&1&1 \\
 \hline
\end{array}
$$
\end{lemma}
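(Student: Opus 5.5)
We apply Lemma \ref{lemma31} directly to the three pairs $(k_1,k_2)$, $(k_2,k_1)$ and $(k_3,k_3)$. Since $p\nmid k_i+k_j$ for all $i,j$, each $r_{ij}(c,d)$ is $1+\big(\frac{\Delta_{ij}(c,d)}{p}\big)$. So the whole proof comes down to rewriting $\Delta_{12}$, $\Delta_{21}$ and $\Delta_{33}$ in terms of $\delta=d-mc^2$ and reading off their Legendre symbols.

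First take the pair $(1,2)$. Here $k_1+k_2=(m+1)^2$ and $k_1k_2=m(m+1)^2$. Hence
\begin{align*}
\Delta_{12}(c,d)=(m+1)^2d-m(m+1)^2c^2=(m+1)^2\delta .
\end{align*}
Since $m+1\not\equiv 0\pmodtight{p}$, the factor $(m+1)^2$ is a nonzero square. Therefore $\big(\frac{\Delta_{12}}{p}\big)=\big(\frac{\delta}{p}\big)$, and $r_{12}$ takes the values $2,0,1$ according as $\delta$ is a residue, a non-residue, or $0$. The quantity $\Delta_{ij}$ is symmetric in $i,j$, so $\Delta_{21}=\Delta_{12}$ and the column for $r_{21}$ is the same. (One can also see this from the bijection that swaps the two summands.)

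Next take the pair $(3,3)$. Here $k_3+k_3=4m$ and $k_3^2=4m^2$. Hence
\begin{align*}
\Delta_{33}(c,d)=4md-4m^2c^2=4m\,\delta .
\end{align*}
Since $p$ is odd, $4$ is a nonzero square. Since $m$ is a quadratic non-residue, $\big(\frac{4m}{p}\big)=-1$, and so $\big(\frac{\Delta_{33}}{p}\big)=-\big(\frac{\delta}{p}\big)$. This gives $r_{33}$ the values $0,2,1$ in the three cases, which is exactly the last column of the table.

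The dichotomy follows at once. In each of the three cases, at least one of $r_{12}(c,d)$ and $r_{33}(c,d)$ is positive. There is no real obstacle here. The only points to check are that $p\nmid k_i+k_j$, which is needed to apply Lemma \ref{lemma31} and was arranged by the choice of $m$, and that the factor $(m+1)^2$ is nonzero modulo $p$.
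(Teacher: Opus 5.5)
Your proof is correct. Applying Lemma \ref{lemma31} with the factorizations $\Delta_{12}=\Delta_{21}=(m+1)^2\delta$ and $\Delta_{33}=4m\delta$, together with $m+1\not\equiv 0\pmodtight{p}$, $p$ odd and $m$ a non-residue, is exactly the computation that the paper's proof delegates to the proof of Lemma 2 in \cite{Chen1}; your version simply makes it self-contained using only the lemma and the conditions on $m$ already imposed in the paper.
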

\begin{proof}The conclusion is contained in the proof of \cite[Lemma 2]{Chen1}.\end{proof}
Let us introduce 
\begin{align}\label{defineE}
  E_1=\{-3,-2,-1\},
  \qquad E_2=\{3,6,9\},
  \qquad E_3=\{0,1,3,4\}.
\end{align}
Directly checking, we easily observe that
\begin{align}\label{Ecomplete}E_1+E_2=E_3+E_3=\{0,1,2,3,4,5,6,7,8\}.\end{align}
We define $A_i\subseteq \mathbb{Z}_{8p^2}$ as
\begin{align}\label{defineAi} A_i=\big\{u+pe+8pv:(u,v)\in \mathcal{Q}_{k_i},\ e\in E_i\big\},
\end{align}
where we choose $0\leqslant u,v\leqslant p-1$. Note that in the above definition, $u+pe+8pv$ is understood modulo $8p^2$. For each fixed $i$, the representation is unique, because reducing modulo $p$ determines $u$, the elements of $E_i$ are pairwise distinct modulo $8$, and $v$ is then determined modulo $p$.
Then we define
\begin{align}A=\bigcup_{i=1}^3 A_i.\label{defineA}
\end{align}
 In fact, we shall prove $1\leqslant \sigma_A(n)\leqslant 32$ for all $n\in \mathbb{Z}_{8p^2}$. For this purpose, we introduce the following lemma.
\begin{lemma}\label{lem-rep}
For any integer $n$ with $0\leqslant n<8p^2$, there exists a unique representation that 
\begin{align*}
  n\equiv c+\varepsilon  p+8pd\pmodtight{8p^2},
  \qquad 0\leqslant c<p,
  \qquad 1\leqslant \varepsilon \leqslant 8, \qquad 0\leqslant d<p,
\end{align*}
where $c,d,\varepsilon $ are all integers.
\end{lemma}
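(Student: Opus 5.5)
Lemma \ref{lem-rep} is a mixed-radix digit decomposition, and we prove it by peeling off the three "digits" one at a time. Fix an integer $n$ with $0\leqslant n<8p^2$.

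\emph{Step 1: the digit $c$.} Let $c\in\{0,1,\ldots,p-1\}$ be the least nonnegative residue of $n$ modulo $p$. This is the only admissible choice, because any representation of the stated form satisfies $n\equiv c\pmod p$; reducing the congruence modulo $8p^2$ further modulo $p$ is legitimate since $p\mid 8p^2$. Then $n-c=pt$ for an integer $t$ with $0\leqslant t<8p$.

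\emph{Step 2: the digit $\varepsilon$.} The congruence to be solved becomes
\begin{align*}
pt\equiv \varepsilon p+8pd\pmod{8p^2},
\end{align*}
which is equivalent to $t\equiv \varepsilon+8d\pmod{8p}$. Reducing modulo $8$ gives $t\equiv \varepsilon\pmod 8$. The integers $1,\ldots,8$ form a complete residue system modulo $8$, so exactly one $\varepsilon\in\{1,\ldots,8\}$ satisfies this. Note that $\varepsilon=8$ occurs precisely when $8\mid t$; this shifted range of $\varepsilon$ is the only point that needs a little care.

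\emph{Step 3: the digit $d$.} Now $t-\varepsilon$ is divisible by $8$, and the condition reduces to $(t-\varepsilon)/8\equiv d\pmod p$. Exactly one $d\in\{0,\ldots,p-1\}$ satisfies this.

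Existence follows by running Steps 1–3 forward. Uniqueness follows because each step shows the corresponding digit is forced by $n$ and by the digits already determined. As an independent check, a counting argument also works. The triples $(c,\varepsilon,d)$ in the stated ranges number $p\cdot 8\cdot p=8p^2=|\mathbb{Z}_{8p^2}|$, so the map $(c,\varepsilon,d)\mapsto c+\varepsilon p+8pd \bmod 8p^2$ is a bijection as soon as it is surjective (or injective), and Steps 1–3 give either property.
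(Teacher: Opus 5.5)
Your proof is correct and is the same mixed-radix digit decomposition as the paper's, carried out in the opposite order. The paper shifts by $p$, divides the least nonnegative residue of $n-p$ by $8p$ to get $d$ and $t=r+p\in[p,9p-1]$, and then splits $t=c+\varepsilon p$, with the shift absorbing the offset range $1\leqslant\varepsilon\leqslant 8$; you extract $c$ first and handle that offset by taking the representative of $t \bmod 8$ in $\{1,\ldots,8\}$, and since each of your steps is an equivalence you also prove uniqueness, which the paper only asserts as clear.
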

\begin{proof}
There exists a pair $t~(p\leqslant t\leqslant 9p-1)$ and $d~(0\leqslant d<p)$ such that 
$$n\equiv t+8pd\pmodtight{8p^2}.$$ 
Actually, let $[n-p]$ be the least non-negative integer of $n-p$ mod $8p^2$. Then, by the division algorithm, there are two integers $0\leqslant d\leqslant p-1$ and $0\leqslant r\leqslant 8p-1$ such that
$
[n-p]=8pd+r.
$
Hence,
$$
n-p\equiv 8pd+r \pmodtight{8p^2}.
$$
Our claim now follows by taking $t=r+p$.
We further write $t=c+\varepsilon p$ with $0\leqslant c<p$ and $1\leqslant \varepsilon \leqslant 8$. It is clear that the representation is unique.
\end{proof}

\begin{lemma}\label{lemma36}
Let $A$ be defined in \eqref{defineA}. Then we have
\[
  A+A=\mathbb Z_{8p^2}.
\]
\end{lemma}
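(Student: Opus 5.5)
Fix $n\in\mathbb Z_{8p^2}$ and write it, via Lemma \ref{lem-rep}, as $n\equiv c+\varepsilon p+8pd \pmodtight{8p^2}$ with $0\leqslant c<p$, $1\leqslant\varepsilon\leqslant 8$, $0\leqslant d<p$. The plan is to use Lemma \ref{lemma32} to choose a pair of indices $(i,j)$, either $(1,2)$ or $(3,3)$, with $r_{ij}(c',d')\geqslant 1$ for a suitably adjusted target $(c',d')\in\mathbb Z_p^2$. Then \eqref{Ecomplete} supplies $e\in E_i$, $e'\in E_j$ that absorb the middle digit $\varepsilon$.

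The key computation concerns a sum of $x=u_1+pe_1+8pv_1\in A_i$ and $y=u_2+pe_2+8pv_2\in A_j$, with $0\leqslant u_1,u_2\leqslant p-1$ as integers. The sum is
$$x+y=(u_1+u_2)+p(e_1+e_2)+8p(v_1+v_2).$$
Here $u_1+u_2$ lies in $[0,2p-2]$, so there is a carry $\kappa\in\{0,1\}$ into the $p$-digit: $u_1+u_2=c+\kappa p$ when $u_1+u_2\equiv c\pmodtight p$. A carry out of the $p$-digit into the $8p$-digit is also possible. To handle this, I will prescribe $e_1+e_2=s$ as an integer and require
$$c+\kappa p+sp\equiv c+\varepsilon p\pmodtight{8p},$$
that is, $s\equiv\varepsilon-\kappa\pmodtight 8$. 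With $s\in\{0,\dots,8\}$ and $\varepsilon-\kappa\in\{0,\dots,8\}$, I simply take $s=\varepsilon-\kappa$. Then no carry to the $8p$-level occurs, and it remains to have $v_1+v_2\equiv d\pmodtight p$.

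The main obstacle is that $\kappa$ depends on the actual representatives $u_1,u_2$, which are determined only after solving the $\mathbb Z_p^2$ problem. I would proceed as follows. Apply Lemma \ref{lemma32} to $(c,d)$ to get $(i,j)\in\{(1,2),(3,3)\}$ and a solution $(u_1,v_1)\in\mathcal Q_{k_i}$, $(u_2,v_2)\in\mathcal Q_{k_j}$ with $u_1+u_2\equiv c$ and $v_1+v_2\equiv d\pmodtight p$. Lift $u_1,u_2$ to $[0,p-1]$, which determines $\kappa$. Then set $s=\varepsilon-\kappa\in\{0,\dots,8\}$.

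If $(i,j)=(1,2)$, \eqref{Ecomplete} gives $e_1\in E_1$, $e_2\in E_2$ with $e_1+e_2=s$. If $(i,j)=(3,3)$, it gives $e_1,e_2\in E_3$ with $e_1+e_2=s$. Since $E_1+E_2=E_3+E_3=\{0,\dots,8\}$ exactly covers every value of $s$, the carry causes no difficulty, and this is precisely why the $E$'s were designed with sumset $\{0,\ldots,8\}$ rather than a residue system mod $8$.

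Finally, I would verify that the resulting $x\in A_i$, $y\in A_j$ satisfy $x+y\equiv n\pmodtight{8p^2}$. This holds because $x+y=c+\varepsilon p+8p(v_1+v_2)$ as integers, and $v_1+v_2\equiv d\pmodtight p$ gives $8p(v_1+v_2)\equiv 8pd\pmodtight{8p^2}$. Since $n$ was arbitrary, $A+A=\mathbb Z_{8p^2}$. (The case $p\leqslant 5$ is outside the standing assumption $p\geqslant 7$ and would be treated separately, presumably by a direct small construction.)
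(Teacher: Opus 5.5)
Your proposal is correct and follows the paper's proof essentially step by step. You apply Lemma~\ref{lemma32} to $(c,d)$ to pick $(i,j)\in\{(1,2),(3,3)\}$, lift $u_1,u_2$ to $\{0,\ldots,p-1\}$ to obtain the carry $q\in\{0,1\}$, and use \eqref{Ecomplete} to choose $e_1+e_2=\varepsilon-q\in\{0,\ldots,8\}$, so that $x+y=c+\varepsilon p+8p(v_1+v_2)\equiv n\pmodtight{8p^2}$. Your restriction to $p\geqslant 7$ matches the paper's standing assumption, and the paper treats $p\leqslant 5$ by a separate direct construction.
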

\begin{proof}
Choose $n\in \mathbb{Z}_{8p^2}$ and we may assume that  $0\leqslant n<8p^2$.  By Lemma \ref{lem-rep}, we express $n$ in the form
\begin{align*}n\equiv c+\varepsilon  p+8pd\pmodtight{8p^2},
  \qquad 0\leqslant c<p,
  \qquad 1\leqslant \varepsilon \leqslant 8, \qquad 0\leqslant d<p,
\end{align*}

If $d-mc^2$ is a non-zero quadratic residue modulo $p$, we choose $(i,j)=(1,2)\in \mathbb{N}^2$, and otherwise we choose $(i,j)=(3,3)$. By Lemma \ref{lemma32}, 
there exist $(u_1,v_1)\in \mathcal{Q}_{k_i}$ and $(u_2,v_2)\in \mathcal{Q}_{k_j}$ such that
\[
  (c,d)=(u_1,v_1)+(u_2,v_2)\in \mathbb{Z}_p^2.
\]
We may assume that $u_1,u_2\in\{0,\ldots,p-1\}$. Since $u_1+u_2\equiv c\pmodtight{p}$, there exists a unique $q\in\{0,1\}$ (depending on $u_1,u_2$ and $c$) such that
\begin{align}
  u_1+u_2=c+pq.
  \label{define-q}
\end{align}

We conclude from  $1\leqslant \varepsilon \leqslant 8$ and  $q\in\{0,1\}$ that $\varepsilon -q\in \{0,\ldots,8\}$. In view of \eqref{Ecomplete}, (no matter 
$(i,j)=(1,2)$ or $(3,3)$) there exist $e_1\in E_{i}$ and $e_2\in E_j$ such that
\[
  e_1+e_2=\varepsilon -q.
\]

Recalling \eqref{defineAi}, we have $u_1+pe_1+8pv_1\in A_i$ and $u_2+pe_2+8pv_2\in A_j$.  Now we can deduce that
\begin{align*}
  (u_1+pe_1+8pv_1)+(u_2+pe_2+8pv_2)= &\ c+pq+p(\varepsilon -q)+8p(v_1+v_2)
  \\ =&\ c+p\varepsilon +8p(v_1+v_2)
\end{align*}
and therefore
\begin{align*}
  (u_1+pe_1+8pv_1)+(u_2+pe_2+8pv_2)\equiv c+p\varepsilon +8pd=n\pmodtight{8p^2}.
\end{align*}
This yields $n\in A_i+A_j$ and thus $n\in A+A$. This completes the proof of this lemma.
\end{proof}

For $i,j\in\{1,2,3\}$ and an integer $s$, we define 
\begin{align}\label{define-rho}
  \rho_{ij}(s)=\#\{(e_1,e_2)\in E_i\times E_j:\ e_1+e_2=s\}.
\end{align}
For $1\leqslant \varepsilon \leqslant 8$ and $h\in \{-1,0,1,2\}$, we define
\begin{align}\label{defineomega}
  \omega^{\varepsilon ,h}_{ij}=\max_{q\in\{0,1\}} \rho_{ij}(\varepsilon +8h-q).
\end{align}
With the above notation, we can now introduce the following lemma.

\begin{lemma}\label{lemmaboundsigma}Let $A$ be defined in \eqref{defineA}. Let $0\leqslant n<8p^2$ and (by Lemma \ref{lem-rep})  we express $n$ (uniquely) in the form
\begin{align}\label{represent-n}n\equiv c+\varepsilon  p+8pd\pmodtight{8p^2},
  \qquad 0\leqslant c<p,
  \qquad 1\leqslant \varepsilon \leqslant 8, \qquad 0\leqslant d<p.
\end{align}Then we have
\begin{align*}
  \sigma_A(n)
  \leqslant \sum_{h=-1}^{2}\Big(2\omega^{\varepsilon ,h}_{11}+4\omega^{\varepsilon ,h}_{13}+2\omega^{\varepsilon ,h}_{22}+4\omega^{\varepsilon ,h}_{23}+\max\{4\omega^{\varepsilon ,h}_{12},\ 2\omega^{\varepsilon ,h}_{33},\ 2\omega^{\varepsilon ,h}_{12}+\omega^{\varepsilon ,h}_{33}\}\Big).
\end{align*}
\end{lemma}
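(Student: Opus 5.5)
We decompose $\sigma_A(n)$ according to which pieces $A_i$, $A_j$ the two summands come from: $\sigma_A(n)\leqslant\sum_{i,j=1}^{3}\sigma_{A_i,A_j}(n)$. Grouping ordered pairs $(i,j)$ and $(j,i)$ gives $\sigma_{A_1,A_1}+\sigma_{A_2,A_2}+2\sigma_{A_1,A_3}+2\sigma_{A_2,A_3}+\big(2\sigma_{A_1,A_2}+\sigma_{A_3,A_3}\big)$. The target inequality then follows from two per-pair bounds. First, for each pair,
\[
\sigma_{A_i,A_j}(n)\leqslant \sum_{h=-1}^{2} r_{ij}(c,d')\,\omega^{\varepsilon,h}_{ij},
\]
where $d'$ is the appropriate shift of $d$. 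Second, we bound $r_{ij}\leqslant 2$, and for the coupled pair $(1,2)$ and $(3,3)$ we use Lemma~\ref{lemma32}.

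\textbf{Lifting a solution.} Fix $(i,j)$ and a solution
\[
n\equiv (u_1+pe_1+8pv_1)+(u_2+pe_2+8pv_2)\pmod{8p^2},
\]
with $0\leqslant u_1,u_2,v_1,v_2<p$, $(u_1,v_1)\in\mathcal Q_{k_i}$, $(u_2,v_2)\in\mathcal Q_{k_j}$ and $e_1\in E_i$, $e_2\in E_j$.

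\begin{enumerate}
\item Reducing mod $p$ gives $u_1+u_2\equiv c$, so $u_1+u_2=c+pq$ with $q\in\{0,1\}$.
\item Subtracting, $p(q+e_1+e_2-\varepsilon)+8p(v_1+v_2-d)\equiv 0\pmod{8p^2}$, i.e.\ $q+e_1+e_2-\varepsilon\equiv 8(d-v_1-v_2)\pmod{8p}$.
\item Hence $q+e_1+e_2-\varepsilon=8h$ for an integer $h$, with $v_1+v_2\equiv d-h\pmod p$.
\item The possible $h$ are limited since $e_1+e_2\in[-6,18]$: indeed $E_1+E_1\subseteq[-6,-2]$ and $E_2+E_2\subseteq[6,18]$. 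With $\varepsilon\in[1,8]$ and $q\in\{0,1\}$, we get $8h=e_1+e_2+q-\varepsilon\in[-14,17]$, so $h\in\{-1,0,1,2\}$. (Checking this range exactly is routine.)
\end{enumerate}

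\textbf{Counting for fixed $h$.} The pair $(u_1,v_1),(u_2,v_2)$ is a representation of $(c,d-h)\in\mathbb Z_p^2$ in $\mathcal Q_{k_i}+\mathcal Q_{k_j}$. There are $r_{ij}(c,d-h)\leqslant 2$ such representations by Lemma~\ref{lemma31}, and each one determines $q$. Given $q$, the pairs $(e_1,e_2)$ satisfy $e_1+e_2=\varepsilon+8h-q$, so there are at most $\rho_{ij}(\varepsilon+8h-q)\leqslant\omega^{\varepsilon,h}_{ij}$ of them. Since the representation of elements of $A_i$ is unique, distinct tuples give distinct ordered pairs. Therefore $\sigma_{A_i,A_j}(n)\leqslant\sum_h r_{ij}(c,d-h)\,\omega^{\varepsilon,h}_{ij}$. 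Bounding $r_{ij}\leqslant 2$ yields the terms $2\omega_{11}$ and $2\omega_{22}$ (diagonal), and $2\cdot 2\omega_{13}$ and $2\cdot2\omega_{23}$ (off-diagonal, counted for both orders).

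\textbf{The coupled term.} For fixed $h$, set $\delta=(d-h)-mc^2$ and apply Lemma~\ref{lemma32} at $(c,d-h)$:
\begin{itemize}
\item if $\delta$ is a nonzero residue, $r_{12}=r_{21}=2$ and $r_{33}=0$, giving $4\omega_{12}$;
\item if $\delta$ is a nonresidue, $r_{12}=r_{21}=0$ and $r_{33}=2$, giving $2\omega_{33}$;
\item if $\delta\equiv0$, all three equal $1$, giving $2\omega_{12}+\omega_{33}$.
\end{itemize}
So for each $h$, $(\sigma_{A_1,A_2}+\sigma_{A_2,A_1})$-contribution plus $\sigma_{A_3,A_3}$-contribution is at most the stated maximum. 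Summing over $h$ gives the lemma.

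The main obstacle is the bookkeeping in the lifting step. One must ensure that every solution mod $8p^2$ is captured by exactly one $h$ in $\{-1,0,1,2\}$. One must also ensure that the factor $\omega$, which maximizes over $q$, legitimately dominates, since $q$ depends on the particular representation. Both points are handled by bounding each representation's contribution individually, as above.
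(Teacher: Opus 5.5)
Your proposal is correct and follows the paper's proof essentially step for step: you split into $\sigma_{A_i,A_j}$, lift via $u_1+u_2=c+pq$ to get $h=(q+e_1+e_2-\varepsilon)/8\in\{-1,0,1,2\}$ with $v_1+v_2\equiv d-h \pmod p$, bound each $h$-slice by $r_{ij}(c,d-h)\,\omega^{\varepsilon,h}_{ij}$, and then use Lemma~\ref{lemma31} for the uncoupled pairs and Lemma~\ref{lemma32} for $(1,2),(2,1),(3,3)$. The only blemish is arithmetic: the upper end of the range of $e_1+e_2+q-\varepsilon$ is $18+1-1=18$, not $17$, which does not affect the conclusion $h\in\{-1,0,1,2\}$.
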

\begin{proof}It is clear that 
\begin{align}\label{boundsigma}\sigma_A(n)\leqslant \sum_{i=1}^3\sum_{j=1}^{3}\sigma_{A_i,A_j}(n).
\end{align}
Now we fix $i,j$ and proceed to bound $\sigma_{A_i,A_j}(n)$. Suppose that $x\in A_i$ and $y\in A_j$ satisfy
\begin{align}\label{xyn}x+y\equiv n\pmodtight{8p^2}.\end{align} Recalling the definition of $A_i$ in \eqref{defineAi}, we express $x$ and $y$ uniquely in the form
\begin{align}\label{expressxy}
  x=u_1+pe_1+8pv_1,
  \qquad y=u_2+pe_2+8pv_2,
\end{align}
where  
\begin{align}\label{condition}(u_1,v_1)\in \mathcal{Q}_{k_i},\ e_1\in E_i,\ (u_2,v_2)\in \mathcal{Q}_{k_j},\ e_2\in E_j\ \textrm{ and }\ u_1,u_2\in\{0,\ldots,p-1\}.\end{align}
Then we conclude from \eqref{represent-n}, \eqref{xyn}  and \eqref{expressxy} that 
\begin{align}\label{u12}
 u_1+u_2\equiv c\pmodtight{p}.
\end{align}
Thus, we can write $u_1+u_2=c+pq$, where $q\in\{0,1\}$. Now \eqref{xyn} is equivalent to 
\begin{align}\label{before-h}
  q+e_1+e_2-\varepsilon +8(v_1+v_2-d)\equiv 0\pmodtight{8p}.
\end{align}
Then we define $h$ as 
\begin{align}\label{he1e2}
  h=\frac{q+e_1+e_2-\varepsilon }{8}.
\end{align}
It follows from \eqref{before-h} that $h$ is an integer. Then we further conclude from \eqref{before-h} and \eqref{he1e2} that 
\begin{align}\label{v12}
  v_1+v_2\equiv d-h\pmodtight{p}.
\end{align}
Note that 
$$
q+e_1+e_2-\varepsilon \leqslant 1+9+9-1=18
$$ 
and 
$$
q+e_1+e_2-\varepsilon \geqslant 0+(-3)+(-3)-8=-14.
$$
 Then we obtain 
$$h\in \{-1,0,1,2\}.$$

Let $\tau_{ij}(h)$ denote the number of solutions of $(u_1,e_1,v_1,u_2,e_2,v_2)$ satisfying \eqref{condition}, \eqref{u12}, \eqref{he1e2} and \eqref{v12}. Then we conclude from above that 
$$
\sigma_{A_i,A_j}(n)\leqslant \sum_{h=-1}^2\tau_{ij}(h)
$$ 
and
\begin{align*}\tau_{ij}(h)\leqslant r_{ij}(c,d-h)\max_{q\in\{0,1\}} \rho_{ij}(\varepsilon +8h-q).\end{align*}
Recalling \eqref{defineomega}, we arrive at
\begin{align*}\sigma_{A_i,A_j}(n)\leqslant \sum_{h=-1}^2r_{ij}(c,d-h)\omega^{\varepsilon ,h}_{ij}.\end{align*}
Putting the above into \eqref{boundsigma}, we obtain
\begin{align}\label{boundsigma2}\sigma_A(n)\leqslant \sum_{h=-1}^2\sum_{i=1}^3\sum_{j=1}^{3}r_{ij}(c,d-h)\omega^{\varepsilon ,h}_{ij}.
\end{align}

For $(i,j)=(1,1),(1,3), (3,1), (2,2), (2,3), (3,2)$, we conclude by Lemma \ref{lemma31} that 
$$
r_{ij}(c,d-h)\leqslant 2.
$$ 
For the remaining triples $(i,j)=(1,2),(2,1),(3,3)$, Lemma \ref{lemma32} gives, for each fixed $h$,
$$
\sum_{(i,j)=(1,2),(2,1), (3,3)}r_{ij}(c,d-h)\omega^{\varepsilon ,h}_{ij}
\leqslant \max\big\{4\omega^{\varepsilon ,h}_{12},\ 2\omega^{\varepsilon ,h}_{33},\ 2\omega^{\varepsilon ,h}_{12}+\omega^{\varepsilon ,h}_{33}\big\},
$$
where we have used the observation that $\omega^{\varepsilon ,h}_{ij}=\omega^{\varepsilon ,h}_{ji}$. Putting the above into \eqref{boundsigma2}, we obtain
\begin{align*}
\sigma_A(n)\leqslant
\sum_{h=-1}^{2}\Big(&2\omega^{\varepsilon ,h}_{11}+4\omega^{\varepsilon ,h}_{13}+2\omega^{\varepsilon ,h}_{22}+4\omega^{\varepsilon ,h}_{23}+\max\{4\omega^{\varepsilon ,h}_{12},\ 2\omega^{\varepsilon ,h}_{33},\ 2\omega^{\varepsilon ,h}_{12}+\omega^{\varepsilon ,h}_{33}\}\Big),
\end{align*}
which is the desired bound.
\end{proof}

\begin{proof}[Proof of Theorem \ref{thm2}]
For $\varepsilon $ and $h$, we introduce
\begin{align*}
  W^{\varepsilon ,h}=2\omega^{\varepsilon ,h}_{11}+4\omega^{\varepsilon ,h}_{13}+2\omega^{\varepsilon ,h}_{22}+4\omega^{\varepsilon ,h}_{23}+\max\{4\omega^{\varepsilon ,h}_{12},\ 2\omega^{\varepsilon ,h}_{33},\ 2\omega^{\varepsilon ,h}_{12}+\omega^{\varepsilon ,h}_{33}\}
\end{align*}
and
$$Y^{\varepsilon }=\sum_{h=-1}^2W^{\varepsilon ,h}.$$
Note that $\varepsilon $ is uniquely determined by $n$. 
By Lemma \ref{lemmaboundsigma}, we have 
$$R_{8p^2} \leqslant \sup_{\varepsilon }Y^{\varepsilon }.$$
For fixed $\varepsilon $ and $h$, in view of \eqref{defineE}, \eqref{define-rho} and \eqref{defineomega}, we can compute all $\omega^{\varepsilon ,h}_{ij}~(1\leqslant i,j\leqslant 3)$, and thus obtain the values of $W^{\varepsilon ,h}$ and $Y^{\varepsilon }$. We list all values in Tables~\ref{tab:omega} and~\ref{tab:Y}. These finite computations are also verified by the code in Appendix~\ref{app:verification}.

From Table~\ref{tab:Y} we have $\sup_{\varepsilon }Y^{\varepsilon}=32$, and this establishes $R_{8p^2}\leqslant 32$ for every prime $p\geqslant 7$. 
For $p\leqslant 5$, we use a simple direct construction. Let
$$
A=\{0,1,\ldots,2p-1\}\cup\{2pt:0\leqslant t\leqslant 4p-1\}\subseteq \mathbb Z_{8p^2}.
$$
Then $|A|=2p+4p-1=6p-1$. Moreover, every residue class modulo $8p^2$ can be written uniquely in the form
$$
n\equiv r+2pt\pmodtight{8p^2},\qquad 0\leqslant r<2p,
\qquad 0\leqslant t<4p.
$$
Here $r\in A$ and $2pt\in A$, so $A+A=\mathbb Z_{8p^2}$. Also, for each fixed $x\in A$ there is at most one $y\in A$ with $x+y\equiv n\pmodtight{8p^2}$; hence $\sigma_A(n)\leqslant |A|=6p-1\leqslant 29<32$. 
  
 This completes the proof of Theorem \ref{thm2}.\end{proof}

\begingroup
\footnotesize
\setlength{\tabcolsep}{3.5pt}
\renewcommand{\arraystretch}{1.08}
\begin{longtable}{ccccccccc}
\caption{Values of $\omega_{ij}^{\varepsilon ,h}$ and $W^{\varepsilon ,h}$}\label{tab:omega}\\
\toprule
$\varepsilon $&$h$&$\omega^{\varepsilon ,h}_{11}$&$\omega^{\varepsilon ,h}_{12}$&$\omega^{\varepsilon ,h}_{13}$&$\omega^{\varepsilon ,h}_{22}$&$\omega^{\varepsilon ,h}_{23}$&$\omega^{\varepsilon ,h}_{33}$&$W^{\varepsilon ,h}$\\
\midrule
\endfirsthead
\caption[]{Values of $\omega_{ij}^{\varepsilon ,h}$ and $W^{\varepsilon ,h}$ continued}\\
\toprule
$\varepsilon $&$h$&$\omega^{\varepsilon ,h}_{11}$&$\omega^{\varepsilon ,h}_{12}$&$\omega^{\varepsilon ,h}_{13}$&$\omega^{\varepsilon ,h}_{22}$&$\omega^{\varepsilon ,h}_{23}$&$\omega^{\varepsilon ,h}_{33}$&$W^{\varepsilon ,h}$\\
\midrule
\endhead
\midrule
\multicolumn{9}{r}{\emph{Continued on the next page}}\\
\endfoot
\bottomrule
\endlastfoot
1&-1&0&0&0&0&0&0&0\\
1&0&0&1&2&0&0&2&12\\
1&1&0&1&0&2&2&1&16\\
1&2&0&0&0&0&0&0&0\\
2&-1&1&0&0&0&0&0&2\\
2&0&0&1&2&0&0&2&12\\
2&1&0&0&0&2&2&0&12\\
2&2&0&0&0&1&0&0&2\\
3&-1&2&0&0&0&0&0&4\\
3&0&0&1&2&0&1&2&16\\
3&1&0&0&0&0&2&0&8\\
3&2&0&0&0&1&0&0&2\\
4&-1&3&0&0&0&0&0&6\\
4&0&0&1&1&0&1&4&16\\
4&1&0&0&0&3&1&0&10\\
4&2&0&0&0&0&0&0&0\\
5&-1&3&0&1&0&0&0&10\\
5&0&0&1&0&0&1&4&12\\
5&1&0&0&0&3&1&0&10\\
5&2&0&0&0&0&0&0&0\\
6&-1&2&0&2&0&0&0&12\\
6&0&0&1&0&1&2&2&14\\
6&1&0&0&0&0&1&0&4\\
6&2&0&0&0&0&0&0&0\\
7&-1&1&0&2&0&0&0&10\\
7&0&0&1&0&1&2&2&14\\
7&1&0&0&0&2&0&0&4\\
7&2&0&0&0&0&0&0&0\\
8&-1&0&1&2&0&0&1&12\\
8&0&0&1&0&0&2&2&12\\
8&1&0&0&0&2&0&0&4\\
8&2&0&0&0&0&0&0&0\\
\end{longtable}
\endgroup

\begingroup
\footnotesize
\setlength{\tabcolsep}{5pt}
\renewcommand{\arraystretch}{1.08}
\begin{longtable}{cccccc}
\caption{Values of $W^{\varepsilon ,h}$ and $Y^{\varepsilon }$}\label{tab:Y}\\
\toprule
$\varepsilon $&$W^{\varepsilon ,-1}$&$W^{\varepsilon ,0}$&$W^{\varepsilon ,1}$&$W^{\varepsilon ,2}$& $Y^{\varepsilon }$\\
\midrule
\endfirsthead
\caption[]{Values of $W^{\varepsilon ,h}$ and $Y^{\varepsilon }$ continued}\\
\toprule
$\varepsilon $&$W^{\varepsilon ,-1}$&$W^{\varepsilon ,0}$&$W^{\varepsilon ,1}$&$W^{\varepsilon ,2}$& $Y^{\varepsilon }$\\
\midrule
\endhead
\midrule
\multicolumn{6}{r}{\emph{Continued on the next page}}\\
\endfoot
\bottomrule
\endlastfoot
1&0&12&16&0&28\\
2&2&12&12&2&28\\
3&4&16&8&2&30\\
4&6&16&10&0&32\\
5&10&12&10&0&32\\
6&12&14&4&0&30\\
7&10&14&4&0&28\\
8&12&12&4&0&28\\
\end{longtable}
\endgroup

\section*{Acknowledgments}
The author acknowledges the use of OpenAI’s ChatGPT during the preparation of this manuscript. This work is supported by the National Key Research and Development Program of China (Grant No. 2021YFA1000700) and the National Natural Science Foundation of China (Grant No. 12471088).

\appendix
\section{Finite verification}\label{app:verification}
The two finite computations used in the paper are independent of any unbounded parameter. For completeness and reproducibility, we include a short Python script which verifies Lemma~\ref{lem1} and Tables~\ref{tab:omega}--\ref{tab:Y}.
\small
\begin{verbatim}
from collections import Counter

# Verification of Lemma 2.1.
c = [74*j - (j*j)//32 for j in range(265)]
sums = Counter(ci + cj for ci in c for cj in c)
assert max(sums.values()) == 17

# Verification of Tables 1 and 2.
E = {
    1: [-3, -2, -1],
    2: [3, 6, 9],
    3: [0, 1, 3, 4],
}

def rho(i, j, s):
    return sum(1 for e1 in E[i] for e2 in E[j] if e1 + e2 == s)

def omega(i, j, eps, h):
    return max(rho(i, j, eps + 8*h - q) for q in (0, 1))

def W(eps, h):
    o = lambda i, j: omega(i, j, eps, h)
    return (2*o(1,1) + 4*o(1,3) + 2*o(2,2) + 4*o(2,3)
            + max(4*o(1,2), 2*o(3,3), 2*o(1,2) + o(3,3)))

Y = {eps: sum(W(eps, h) for h in (-1, 0, 1, 2))
     for eps in range(1, 9)}
assert Y == {1: 28, 2: 28, 3: 30, 4: 32,
             5: 32, 6: 30, 7: 28, 8: 28}
assert max(Y.values()) == 32
\end{verbatim}

\end{document}